\documentclass[a4paper,12pt, twoside, reqno]{amsart}
%
%
\usepackage{amsmath}%
\usepackage{amsfonts}%
\usepackage{amssymb}%
\usepackage{graphicx}
%
\newtheorem{theorem}{Theorem}
\theoremstyle{plain}

\newtheorem{definition}{Definition}
\newtheorem{example}{Example}

\newtheorem{lemma}{Lemma}

\newtheorem{remark}{Remark}

\newtheorem{ass}{Assumption}[section]
\numberwithin{equation}{section}
\newcommand{\R}{\mathbb R}

\begin{document}
\title{Generalized coupled fixed point theorems for mixed monotone mappings in partially ordered metric spaces}
\author{Vasile Berinde}

\begin{abstract}
In this paper we extend the coupled fixed point theorems for mixed monotone operators $F:X \times X \rightarrow  X$ obtained in  [T.G. Bhaskar, V. Lakshmikantham, \textit{Fixed point theorems in partially ordered metric spaces and applications}, Nonlinear Anal. TMA \textbf{65} (2006) 1379-1393] by significantly weakening the involved contractive condition. Our technique of proof is essentially different and more natural. An example as well an application to periodic BVP are also given in order to illustrate the effectiveness of our generalizations. 
\end{abstract}
\maketitle

\pagestyle{myheadings} \markboth{Vasile Berinde} {Generalized coupled fixed point theorems for mixed monotone mappings}

\section{Introduction} 
The Banach contraction principle, which is the most famous metrical fixed point theorem, play a very important role in nonlinear analysis. It basically asserts that, if $(X,d)$ is a complete metric space and $T:X\rightarrow X$ is a contraction, i.e., there exists a constant $c \in [0,1)$ such that
\begin{equation} \label{Banach}
d(Tx, Ty)\leq c\,d(x,y),\,\text{for all}\;x,y\in X\,,
\end{equation}
then $T$ has a unique fixed point $x^\ast$ in $X$, i.e., $T(x^\ast)=x^\ast$ and the Picard iteration $\{x_n\}^\infty_{n=0}$
defined by
\begin{equation} \label{Picard}
x_{n+1}=Tx_n\,,\quad n=0,1,2,\dots
\end{equation}
converges to $x^\ast$, for any $x_0\in X$.

The Banach contraction principle has been generalized in several directions, see for example \cite{Ber07} and \cite{Rus2}. Another recent  direction of such generalizations, see \cite{Aga}, \cite{LakC}-\cite{Ran}, has been obtained by weakening the requirements in the contractive condition \eqref{Banach} and, in compensation, by simultaneously enriching the metric space structure  with a partial order. 
The first result of this kind is the following fixed point theorem for monotone mappings in ordered metric spaces obtained in \cite{Ran}.  

\begin{theorem}[Ran and Reurings \cite{Ran}]\label{th1}

Let $X$  be a partially ordered set such that every pair $x, y \in X$  has a lower  bound and an upper bound. Furthermore,  let $d$ be a metric on $X$  such that $(X, d)$ is a complete metric space. If $T$ is a continuous, monotone (i.e., either order-preserving or order-reversing) map from $X$  into $X$  such that
\begin{equation} \label{Cond-Ran}
\exists 0<c<1:\,d(T(x), T(y))\leq c\,d(x,y),\,\forall x\geq y,
\end{equation}  
\begin{equation} \label{exist}
\exists x_0\in X:\,x_0\leq T(x_0) \textnormal{ or } x_0\geq T(x_0),
\end{equation}
then $T$ has a unique fixed point $\overline{x}$. Moreover, for every $x \in X$,
$$
\lim_{n \rightarrow \infty} T^n(x)=\overline{x}.
$$
	\end{theorem}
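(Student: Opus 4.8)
The plan is to graft the geometric estimate of the Banach principle onto the order structure, the delicate point being that the contractive inequality \eqref{Cond-Ran} is assumed only for comparable pairs, so the whole argument hinges on arranging that the iterates to which it is applied always remain comparable. First I would record the elementary but essential remark that, since $d$ is symmetric, \eqref{Cond-Ran} in fact holds for \emph{every} comparable pair $x,y$, whether $x \ge y$ or $x \le y$.

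For existence I would fix the $x_0$ furnished by \eqref{exist} and set $x_n = T^n(x_0)$, so that $x_{n+1} = T x_n$. The crucial observation is that monotonicity propagates comparability along the orbit: if $T$ is order-preserving the sequence $\{x_n\}$ is monotone, while if $T$ is order-reversing the comparison merely alternates direction; in either case $x_n$ and $x_{n+1}$ are comparable for every $n$. Applying \eqref{Cond-Ran} step by step along the orbit then gives $d(x_n, x_{n+1}) \le c^n\, d(x_0, x_1)$, exactly as in the classical proof, and the usual geometric-series bound shows $\{x_n\}$ is Cauchy. Completeness of $(X,d)$ yields a limit $\bar x$, and continuity of $T$ gives $T\bar x = \lim_n T x_n = \lim_n x_{n+1} = \bar x$, so $\bar x$ is a fixed point.

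For the global convergence $T^n(x) \to \bar x$ and, as a consequence, uniqueness, the hypothesis that every pair has both an upper and a lower bound is what does the real work, since an arbitrary $x$ need not be comparable to $\bar x$. Given such an $x$, I would choose an upper bound $u$ of $\{x, \bar x\}$. Monotonicity again preserves comparability: $T^n u$ stays comparable to $T^n \bar x = \bar x$ and to $T^n x$ for all $n$, so \eqref{Cond-Ran} can be iterated to obtain $d(T^n u, \bar x) \le c^n\, d(u, \bar x)$ and $d(T^n x, T^n u) \le c^n\, d(x, u)$. The triangle inequality $d(T^n x, \bar x) \le d(T^n x, T^n u) + d(T^n u, \bar x)$ then forces $T^n x \to \bar x$. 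Uniqueness follows at once: any second fixed point $\bar y$ satisfies $\bar y = T^n \bar y \to \bar x$, whence $\bar y = \bar x$.

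The step I expect to be the main obstacle is not analytic but combinatorial: carefully tracking comparability in the order-reversing case and in the passage from $x$ to the auxiliary bound $u$, so as to guarantee that at every stage the two arguments of $d$ to which \eqref{Cond-Ran} is applied are genuinely comparable, this being the only place the contractive estimate is available. Once comparability is secured at each step, everything reduces to the standard geometric estimates underlying the Banach contraction principle.
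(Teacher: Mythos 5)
The paper states Theorem \ref{th1} without proof, citing it from Ran and Reurings, so there is no internal argument to compare against; your proposal is correct and coincides with the standard proof from that source. The two points you flag as delicate are handled correctly: comparability of $x_n$ and $x_{n+1}$ (and of $T^n u$ with $T^n x$ and with $T^n\overline{x}$) propagates by induction under either order-preserving or order-reversing monotonicity, which is exactly what permits iterating \eqref{Cond-Ran}, and the upper-bound hypothesis together with the triangle inequality through $T^n u$ gives global convergence and hence uniqueness.
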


 \begin{remark} \em
Note first that, while condition \eqref{Banach} must be fulfilled for all $x,y\in X$, condition  \eqref{Cond-Ran} requires only to be  satisfied for all $x,y\in X$ with $ x\geq y$. Secondly, Theorem \ref{th1} was shown to have important applications in solving nonlinear matrix equations \cite{Ran}. Note that in the absence of the assumption "every pair $x, y \in X$  has a lower  bound and an upper bound", the fixed point in Theorem \ref{th1} may not be unique.
 \end{remark}

Following basically the same approach as the one in \cite{Ran}, and unifying the results in \cite{Nie06} and \cite{Nie07}, Bhaskar and Lakshmikantham \cite{Bha} obtained some coupled fixed point results for mixed monotone operators  $F:X \times X \rightarrow  X$ which satisfy a certain contractive type condition, where $X$ is a partially ordered metric space. 





In order to state the main result in \cite{Bha}, we need the following notions. Let$\left(X,\leq\right)$ be a partially ordered set and  endow the product space $X \times X$ with the following partial order:$$\textnormal{for } \left(x,y\right), \left(u,v\right) \in X \times X,  \left(u,v\right) \leq \left(x,y\right) \Leftrightarrow x\geq u,  y\leq v.$$

We say that a mapping $F:X \times X \rightarrow X$ has the \textit{mixed monotone property} if $F\left(x,y\right)$ is monotone nondecreasing in $ x$ and is monotone non increasing in $y$, that is, for any $x, y \in X,$   
$$  x_{1}, x_{2} \in X,  x_{1} \leq  x_{2} \Rightarrow  F\left(x_{1},y\right) \leq  F\left(x_{2},y\right) $$
  and, respectively,
$$ y_{1},  y_{2} \in  X, y_{1} \leq y_{2} \Rightarrow F\left(x,y_{1}\right)  \geq F\left(x,y_{2}\right). $$

A pair $ \left(x,y\right) \in X \times X $ is called a \textit{coupled  fixed  point} of the mapping $F$ if                                              $$ F\left(x,y\right) = x,  F\left(y,x\right) = y.$$
The next theorem is the main existence result in \cite{Bha}.

\begin{theorem}[Bhaskar and Lakshmikantham \cite{Bha}]\label{th2}
	Let  $\left(X,\leq\right)$ be a partially ordered set and suppose there is a metric $d$  on $X$ such that $\left(X,d\right)$  is a complete metric space. Let $F : X \times X \rightarrow X $ be a continuous mapping having the mixed monotone property on $X$. Assume that there exists a constant $k \in \left[0,1\right)$  with                                                                                   
\begin{equation} \label{Bhas}
	d\left(F\left(x,y\right),F\left(u,v\right)\right) \leq \frac{k}{2}\left[d\left(x,u\right) + d\left(y,v\right)\right],\textnormal{ for each }x \geq u, y \leq  v.
\end{equation}  
If there exist $x_{0}, y_{0} \in X$ such that  
$$                                                                                       
x_{0} \leq F\left(x_{0},y_{0}\right)\textrm{ and }y_{0} \geq F\left(y_{0},x_{0}\right),
$$
 then there exist $x, y \in X$ such that $$x = F\left(x,y\right)\textnormal{ and }y = F\left(y,x\right).$$ 
	\end{theorem}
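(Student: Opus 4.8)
The plan is to mimic the Picard-iteration strategy behind the Banach and Ran--Reurings theorems, but to run two coupled sequences simultaneously. Starting from the given $x_0, y_0$, I would define
$$x_{n+1} = F(x_n, y_n), \qquad y_{n+1} = F(y_n, x_n), \qquad n \geq 0.$$
The first task is to prove by induction that these sequences are monotone in the correct directions, namely $x_n \leq x_{n+1}$ and $y_n \geq y_{n+1}$ for all $n$. The base case is exactly the hypothesis $x_0 \leq F(x_0,y_0) = x_1$ and $y_0 \geq F(y_0,x_0) = y_1$. For the inductive step, assuming $x_{n-1} \leq x_n$ and $y_{n-1} \geq y_n$, I would invoke the mixed monotone property twice: nondecreasingness in the first slot gives $F(x_{n-1},y_{n-1}) \leq F(x_n,y_{n-1})$, and nonincreasingness in the second slot gives $F(x_n,y_{n-1}) \leq F(x_n,y_n)$, whence $x_n \leq x_{n+1}$; the argument for $y_n \geq y_{n+1}$ is symmetric.

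Once monotonicity is in hand, the consecutive iterates are comparable in precisely the sense required by \eqref{Bhas}, since $x_n \geq x_{n-1}$ while $y_n \leq y_{n-1}$. I would apply \eqref{Bhas} to $d(x_{n+1},x_n) = d(F(x_n,y_n), F(x_{n-1},y_{n-1}))$, and, using the symmetry of $d$ to swap the arguments into the right orientation, also to $d(y_{n+1},y_n) = d(F(y_{n-1},x_{n-1}), F(y_n,x_n))$. Adding the two resulting inequalities, the factor $k/2$ collapses and yields
$$\delta_n := d(x_{n+1},x_n) + d(y_{n+1},y_n) \leq k\,\delta_{n-1},$$
so that $\delta_n \leq k^n \delta_0$. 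This is the decisive computation: the whole purpose of the constant $k/2$ in \eqref{Bhas} is exactly to produce a genuine contraction factor $k < 1$ on the combined quantity $\delta_n$, rather than on each sequence separately.

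From $\delta_n \leq k^n \delta_0$ it follows that $\sum_n d(x_n,x_{n+1})$ and $\sum_n d(y_n,y_{n+1})$ are dominated by convergent geometric series, so $\{x_n\}$ and $\{y_n\}$ are Cauchy; completeness of $(X,d)$ then furnishes limits $x_n \to x$ and $y_n \to y$. Passing to the limit in $x_{n+1} = F(x_n,y_n)$ and $y_{n+1} = F(y_n,x_n)$ and invoking the continuity of $F$ gives $x = F(x,y)$ and $y = F(y,x)$, so $(x,y)$ is the desired coupled fixed point.

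I expect the main obstacle to be less the final limiting argument than the careful bookkeeping in the two middle steps: verifying the monotonicity induction in both variables at once, and, above all, checking that the pairs of iterates are oriented so that \eqref{Bhas} genuinely applies to \emph{both} the $x$-estimate and the $y$-estimate. The orientation for the $y$-sequence is the subtle point, since the hypothesis $y_n \leq y_{n-1}$ runs opposite to the direction demanded by the contractive inequality, forcing one to exploit the symmetry of the metric before \eqref{Bhas} can be used. Getting these comparabilities exactly right is what makes the clean contraction on $\delta_n$ possible.
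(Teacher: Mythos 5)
Your argument is correct: the monotonicity induction, the careful orientation of the iterate pairs so that \eqref{Bhas} applies to both the $x$-estimate and the $y$-estimate, the resulting contraction $\delta_n\leq k\,\delta_{n-1}$, and the limit passage via continuity of $F$ all go through. Note that the paper does not actually prove Theorem \ref{th2} (it is quoted from Bhaskar and Lakshmikantham), but its proof of the generalization, Theorem \ref{th3}, is essentially your argument repackaged in the product space: with $d_2\bigl((x,y),(u,v)\bigr)=\frac{1}{2}\left[d(x,u)+d(y,v)\right]$ and $T(x,y)=\bigl(F(x,y),F(y,x)\bigr)$, your quantity $\delta_n$ is exactly $2\,d_2(Z_{n+1},Z_n)$ and your coupled iteration is the Picard iteration of $T$, so the two proofs coincide up to notation.
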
        
	
In \cite{Bha} Bhaskar and Lakshmikantham also established some uniqueness results for coupled fixed points, as well as existence of fixed points of $F$ ($x$ is a fixed point of $F$ if $F(x,x)=x$).

Starting from the results in \cite{Bha}, our main aim in this paper is to obtain more general coupled fixed point theorems for mixed monotone operators $F : X \times X \rightarrow X $  satisfying a contractive condition which is significantly more general than \eqref{Bhas}. 

Our approach brings at least five new features to the coupled fixed point theory: first, we weaken the contractive condition satisfied by $F$; secondly, we do not assume the continuity of $F$; third, our technique of proof is simpler and essentially different from the one used in \cite{Bha} and in the numerous papers devoted to coupled fixed point problems that appeared in the last years; fourth, we provide a method for approximating the coupled fixed points and, fifth, we also provide an error estimate for this method.                                                                        
	
\section{Main results}
	
The first main result in this paper is the following coupled fixed point result which generalize Theorem \ref{th2} (Theorem 2.1 in \cite{Bha}).

\begin{theorem}\label{th3}
	Let  $\left(X,\leq\right)$ be a partially ordered set and suppose there is a metric $d$  on $X$ such that $\left(X,d\right)$  is a complete metric space. Let $F : X \times X \rightarrow X $ be a  mixed monotone mapping for which there exists a constant $k \in \left[0,1\right)$ such that for each $x \geq u, y \leq  v$,                                                                       
\begin{equation} \label{Bhas1}
d\left(F\left(x,y\right),F\left(u,v\right)\right)+d\left(F\left(y,x\right),F\left(v,u\right)\right)  \leq k \left[d\left(x,u\right) + d\left(y,v\right)\right],
\end{equation}  
If there exist $x_{0}, y_{0} \in X$ such that  
\begin{equation} \label{mic}
 x_{0} \leq F\left(x_{0},y_{0}\right)\textrm{ and }y_{0} \leq F\left(y_{0},x_{0}\right),
\end{equation} or
\begin{equation} \label{mare} 
x_{0} \geq F\left(x_{0},y_{0}\right)\textrm{ and }y_{0} \leq F\left(y_{0},x_{0}\right),
\end{equation}
 then there exist $\overline{x}, \overline{y} \in X$ such that $$\overline{x} = F\left(\overline{x},\overline{y}\right)\textrm{and }\overline{y} = F\left(\overline{y},\overline{x}\right).$$ 
	\end{theorem}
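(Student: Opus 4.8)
The plan is to recognise that the left-hand side of \eqref{Bhas1} is nothing but a genuine Banach-type contraction once the problem is lifted to the product space $X\times X$ equipped with the sum metric $(x,y),(u,v)\mapsto d(x,u)+d(y,v)$; this is exactly what makes the argument simpler and more natural than the one in \cite{Bha} (note also that \eqref{Bhas} implies \eqref{Bhas1} by adding the contractive inequality to its copy with the arguments swapped, so Theorem \ref{th3} is indeed more general). Concretely, starting from an admissible pair $(x_0,y_0)$ I would define the two coupled sequences
\begin{equation*}
x_{n+1}=F(x_n,y_n),\qquad y_{n+1}=F(y_n,x_n),\qquad n=0,1,2,\dots,
\end{equation*}
and track the single scalar $D_n:=d(x_n,x_{n+1})+d(y_n,y_{n+1})$. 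The key identity is that, writing $d(x_n,x_{n+1})=d\!\left(F(x_{n-1},y_{n-1}),F(x_n,y_n)\right)$ and $d(y_n,y_{n+1})=d\!\left(F(y_{n-1},x_{n-1}),F(y_n,x_n)\right)$, the sum $D_n$ is precisely the left-hand side of \eqref{Bhas1} evaluated at the pair $(x_{n-1},y_{n-1})$, $(x_n,y_n)$, so that \eqref{Bhas1} delivers at once the geometric decay $D_n\le k\,D_{n-1}$.

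The step legitimating this application is the monotone behaviour of the iterates. Using the mixed monotone property together with the chosen initial inequality \eqref{mic} or \eqref{mare}, I would prove by induction that the two sequences are monotone in opposite order-directions (one nondecreasing and the other nonincreasing), which is exactly the statement that the consecutive pairs $(x_{n-1},y_{n-1})$ and $(x_n,y_n)$ are comparable in the product order, so the comparability requirement ``$x\ge u,\ y\le v$'' in \eqref{Bhas1} is satisfied at every index; the initial conditions \eqref{mic}, \eqref{mare} are precisely what is needed to start this induction, and mixed monotonicity then propagates the comparability. Granting this, $D_n\le k^nD_0$, and the triangle inequality applied to the sum gives
\begin{equation*}
d(x_n,x_{n+p})+d(y_n,y_{n+p})\le\sum_{j=0}^{p-1}D_{n+j}\le\frac{k^n}{1-k}\,D_0\xrightarrow[n\to\infty]{}0,
\end{equation*}
so that both $\{x_n\}$ and $\{y_n\}$ are Cauchy; by completeness of $(X,d)$ they converge to limits $\overline{x}$ and $\overline{y}$. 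Letting $p\to\infty$ in the displayed estimate also yields the a priori error bound promised in the introduction.

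The main obstacle is the closing step, namely verifying that $(\overline{x},\overline{y})$ is an actual coupled fixed point, since continuity of $F$ is deliberately not assumed. I would bound $d\!\left(\overline{x},F(\overline{x},\overline{y})\right)+d\!\left(\overline{y},F(\overline{y},\overline{x})\right)$ by inserting $x_{n+1}=F(x_n,y_n)$ and $y_{n+1}=F(y_n,x_n)$ and then applying \eqref{Bhas1} one final time to the pair $(x_n,y_n)$, $(\overline{x},\overline{y})$: the first two terms tend to $0$ by convergence, and the contractive term is then controlled by $D$-type quantities that already vanish, forcing the whole expression to $0$. The delicate point, and where I expect the argument to need the most care, is that this last use of \eqref{Bhas1} is licit only when $(x_n,y_n)$ is comparable to its limit $(\overline{x},\overline{y})$ in the product order; this is an order-regularity (order-closedness) property ensuring that the monotone iterates stay on the correct side of their limits, and it is precisely the hypothesis that here plays the role formerly taken by continuity. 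With that property granted, both defining equalities $\overline{x}=F(\overline{x},\overline{y})$ and $\overline{y}=F(\overline{y},\overline{x})$ follow simultaneously and the proof closes.
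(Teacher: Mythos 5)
Your construction coincides with the paper's almost step for step: the paper also lifts the problem to $X^2$ with the (half-)sum metric $d_2(Y,V)=\tfrac12[d(x,u)+d(y,v)]$, forms the operator $T(x,y)=(F(x,y),F(y,x))$, observes that \eqref{Bhas1} is exactly a Banach contraction for $T$ restricted to comparable pairs, uses mixed monotonicity and \eqref{mic}/\eqref{mare} to show the Picard iterates $Z_n=(x_n,y_n)$ form a monotone (hence pairwise comparable) sequence, derives the geometric decay $d_2(Z_{n+1},Z_n)\le k^n d_2(Z_1,Z_0)$, and concludes that $\{Z_n\}$ is Cauchy and convergent. Your $D_n\le kD_{n-1}$ computation, the Cauchy estimate, and the a priori error bound are all the same argument in different notation, and your side remark that \eqref{Bhas} implies \eqref{Bhas1} by symmetrization is correct.

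The divergence, and the gap, is in the closing step. The paper finishes by asserting that $T$ is continuous on $(X^2,d_2)$ ``by virtue of the Lipschitzian type condition \eqref{Bhas1}'' and then passes to the limit in $Z_{n+1}=T(Z_n)$. You instead propose to apply \eqref{Bhas1} to the pair $(x_n,y_n)$ versus $(\overline{x},\overline{y})$, and you correctly identify that this is licit only if each iterate is comparable to its limit, i.e.\ only under an order-regularity (order-closedness) assumption on $(X,d,\le)$. The problem is that no such assumption appears among the hypotheses of Theorem \ref{th3}, so you cannot ``grant'' it: as written, your proof does not establish the theorem as stated but a variant with an extra hypothesis. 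You have in fact put your finger on the genuinely delicate point of this result --- the condition \eqref{Bhas1} only controls $T$ on comparable pairs, so the paper's appeal to continuity of $T$ is itself terse, and the standard fixes in the literature are precisely either to assume $F$ continuous or to assume the order-regularity you describe --- but relative to the statement you were asked to prove, the final step is left resting on an unstated hypothesis and the argument does not close.
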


\begin{proof}
Consider the functional $d_2:X^2\times X^2 \rightarrow \mathbb{R}_{+}$ defined by
$$
d_2(Y,V)=\frac{1}{2}\left[d(x,u)+d(y,v)\right],\,\forall Y=(x,y),V=(u,v) \in X^2.
$$
It is a simple task to check that $d_2$ is a metric on $X^2$ and, moreover, that, if $(X,d)$ is complete, then $(X^2,d_2)$ is a complete metric space, too.
Now consider the operator $T:X^2\rightarrow X^2$ defined by
$$
T(Y)=\left(F(x,y),F(y,x)\right),\,\forall Y=(x,y) \in X^2.
$$
Clearly, for $Y=(x,y),\,V=(u,v)\in X^2$, in view of the definition of $d_2$, we have
$$
d_2(T(Y),T(V))=\frac{d\left(F\left(x,y\right),F\left(u,v\right)\right)+d\left(F\left(y,x\right),F\left(v,u\right)\right)}{2}
$$
and
$$
d_2(Y,V)=\frac{d\left(x,u\right) + d\left(y,v\right)}{2}.
$$
Hence, by the contractive condition \eqref{Bhas1} we obtain a Banach type contraction condition: 
\begin{equation} \label{contr}
d_2(T(Y), T(V))\leq k\,d_2(Y,V),\,\forall Y,V \in X^2,\textnormal{ with }Y\geq V.
\end{equation} 
Assume \eqref{mic} holds (the case \eqref{mare} is similar). Then, there exists $x_0,y_0\in X$ such that
$$
x_0\leq F(x_0,y_0) \textnormal{ and } y_0\geq F(y_0,x_0). 
$$
Denote $Z_0=(x_0,y_0)\in X^2$ and consider the Picard iteration associated to $T$ and to the initial approximation $Z_0$, that is, the sequence $\{Z_n\}\subset X^2$ defined by
\begin{equation} \label{eq-3}
Z_{n+1}=T (Z_n),\,n\geq 0,
\end{equation}
where $Z_n=(x_n,y_n)\in X^2,\,n\geq 0$.

Since $F$ is mixed monotone, we have
$$
Z_0=(x_0,y_0)\leq (F(x_0,y_0), F(y_0,x_0))=(x_1,y_1)=Z_1
$$
and, by induction,
$$
Z_n=(x_n,y_n)\leq (F(x_n,y_n), F(y_n,x_n))=(x_{n+1},y_{n+1})=Z_{n+1},
$$
which shows that $T$ is monotone and the sequence $\{Z_n\}_{n=0}^{\infty}$ is non decreasing.
We follow now the steps in the proof of Banach's contraction fixed point theorem. Take $Y=Z_n\geq Z_{n-1}=V$ in \eqref{contr} and obtain
$$
d_2(T(Z_{n}),T(Z_{n-1})\leq k \cdot d_2 (Z_n, Z_{n-1}),\,n\geq 1,
$$
that is,
\begin{equation} \label{eq-4}
d_2(Z_{n+1},Z_n)\leq k \cdot d_2 (Z_n, Z_{n-1}),\,n\geq 1,
\end{equation}
which, by induction, gives
\begin{equation} \label{eq-5}
d_2(Z_{n+1},Z_n)\leq k^n \cdot d_2 (Z_1, Z_{0}),\,n\geq 1.
\end{equation}
We claim that $\{Z_n\}_{n=0}^{\infty}$ is a Cauchy sequence in  $(X^2,d_2)$. Let $n<m$. Then by \eqref{eq-5}
$$
d_2(Z_{n},Z_m)\leq \sum_{i=n+1}^{m} d_2 (Z_i, Z_{i-1})\leq \left(k^n+k^{n+1}+\dots+k^{m-n-1}\right)d_2 (Z_1, Z_{0})
$$
$$
\leq k^n \frac{1-k^{m-n-1}}{1-k}d_2 (Z_1, Z_{0}).
$$
So, $\{Z_n\}_{n=0}^{\infty}$ is indeed a Cauchy sequence in the complete metric space $(X^2,d_2)$ and hence, convergent: there exists $\overline{Z}\in X^2$ such that
$$
\lim_{n\rightarrow \infty} Z_n=\overline{Z}.
$$ 
Because $T$ is continuous in $(X^2,d_2)$ , by virtue of the Lipschitzian type condition \eqref{Bhas1}, by \eqref{eq-3} it follows that $\overline{Z}$ is a fixed point of $T$, that is,
$$
T(\overline{Z})=\overline{Z}.
$$
Let $\overline{Z}=(\overline{x},\overline{y})$. Then, by the definition of $T$, this means
$$\overline{x} = F\left(\overline{x},\overline{y}\right)\textrm{and }\overline{y} = F\left(\overline{y},\overline{x}\right),$$ 
that is,  $(\overline{x},\overline{y})$ is a coupled fixed point of $F$.
\end{proof}

 \begin{remark} \em
Theorem \ref{th3} is more general than Theorem \ref{th2}, since the contractive condition \eqref{Bhas1} is weaker than \eqref{Bhas}, a fact which is clearly illustrated by the next example.
 \end{remark}
 
\begin{example} \label{ex1}
Let $X=\mathbb{R},$ $d\left(x,y\right)=|x-y|$ and $F:X\times X \rightarrow X$ be defined by 
$$
F\left(x,y\right)=\frac{x-3y}{5}, \,(x,y)\in X^2.
$$ 
Then $F$ is mixed monotone and satisfies condition \eqref{Bhas1}  but  does not satisfy condition \eqref{Bhas}. 
Indeed, assume there exists $k$, $0\leq k <1$, such that  \eqref{Bhas} holds. This means
$$
\left|\frac{x-3y}{5}-\frac{u-3v}{5}\right| \leq \frac{k}{2}\left[\left|x-u\right|+\left|y-v\right|\right],\,x\geq u,\,y\leq v,
$$
by which, for $x=u$, we get
$$
\frac{3}{5}\left|y-v\right| \leq \frac{k}{2}\left|y-v\right|,\,y\leq v,
$$
which for $y<v$ would imply $\dfrac{3}{5}\leq \dfrac{k}{2}\Leftrightarrow \dfrac{6}{5}\leq k <1$, a contradiction.

Now we prove that \eqref{Bhas1} holds. Indeed, since we have
$$
\left|\frac{x-3y}{5}-\frac{u-3v}{5}\right| \leq \frac{1}{5}\left|x-u\right|+\frac{3}{5}\left|y-v\right|,\,x\geq u,\,y\leq v,
$$
and
$$
\left|\frac{y-3x}{5}-\frac{v-3u}{5}\right| \leq \frac{1}{5}\left|y-v\right|+\frac{3}{5}\left|x-u\right|,\,x\geq u,\,y\leq v,
$$
by summing up the two inequalities above we get exactly  \eqref{Bhas1} with $k=\dfrac{4}{5}<1$. Note also that $x_0=-3,\,y_0=3$ satisfy \eqref{mic}.

So  by Theorem \ref{th3} we obtain that $F$ has a (unique) coupled fixed point $(0,0)$ but Theorem \ref{th2} cannot be applied to $F$ in this example.
\end{example} 

\begin{remark} \em
As suggested by Example \ref{ex1}, let us note that, since the contractivity condition \eqref{Bhas1} is valid  only for comparable elements in  $X^2$, Theorem \ref{th3} cannot guarantee in general the uniqueness of the coupled fixed point. 
\end{remark}

It is therefore our interest now to find additional conditions to ensure that the coupled fixed point in Theorem \ref{th3} is in fact unique. Such a condition is the one used in Theorem 2.2 of Bhaskar and Lakshmikantham \cite{Bha} or in Theorem \ref{th2} of Ran and Reurings \cite{Ran}:

\textnormal{every pair of elements in} $X^2$ \textnormal{has either a lower bound or an upper bound},
which is known, see \cite{Bha}, to be equivalent to the following condition: for all $Y=(x,y),\,\overline{Y}=(\overline{x},\overline{y})\in X^2$,
\begin{equation} \label{eq-7}
\exists Z=(z_1,z_2)\in X^2 \textnormal{ that is comparable to }  Y \textnormal{ and } \overline{Y}.
\end{equation}

\begin{theorem} \label{th4}
Adding  condition  \eqref{eq-7}  to  the  hypotheses  of   Theorem \ref{th3},  we  obtain  the
uniqueness of the coupled fixed point of  $F$.
\end{theorem}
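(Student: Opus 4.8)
The plan is to leverage the reformulation already established in the proof of Theorem~\ref{th3}: the coupled fixed points of $F$ are exactly the fixed points of the operator $T:X^2\to X^2$, $T(Y)=(F(x,y),F(y,x))$, and $T$ satisfies the contraction \eqref{contr} on comparable pairs with respect to $d_2$. Since Theorem~\ref{th3} already provides the existence of such a fixed point, it suffices to show that, under the additional hypothesis \eqref{eq-7}, the operator $T$ has at most one fixed point in $(X^2,d_2)$.

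First I would record the monotonicity of $T$ on $(X^2,\le)$, which is implicit in the existence proof: if $Y=(x,y)\le V=(u,v)$, i.e. $x\le u$ and $y\ge v$, then the mixed monotone property of $F$ yields $F(x,y)\le F(u,v)$ and $F(y,x)\ge F(v,u)$, hence $T(Y)\le T(V)$. Thus $T$ is order-preserving. The immediate consequence I will use is that comparability with a fixed point is propagated along the orbit: if $Z$ is comparable to a fixed point $\bar Z$ of $T$ (say $Z\ge\bar Z$ or $Z\le\bar Z$), then, because $T(\bar Z)=\bar Z$, the image $T(Z)$ satisfies the same relation with $\bar Z$, and by induction so does $T^n(Z)$ for every $n$.

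Next, suppose $\bar Z=(\bar x,\bar y)$ and $\tilde Z=(\tilde x,\tilde y)$ are two fixed points of $T$. By \eqref{eq-7} there exists $Z\in X^2$ comparable both to $\bar Z$ and to $\tilde Z$. By the remark above, $T^n(Z)$ remains comparable to $\bar Z$ for every $n\ge 0$; since $d_2$ is symmetric, the one-sided inequality \eqref{contr} may be invoked for the comparable pair $(T^n(Z),\bar Z)$ regardless of its orientation, giving
$$
d_2(T^{n+1}(Z),\bar Z)=d_2\bigl(T(T^n(Z)),T(\bar Z)\bigr)\le k\,d_2(T^n(Z),\bar Z).
$$
Iterating yields $d_2(T^n(Z),\bar Z)\le k^n\,d_2(Z,\bar Z)\to 0$, so $T^n(Z)\to\bar Z$. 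Running the identical argument with $\tilde Z$ in place of $\bar Z$ gives $T^n(Z)\to\tilde Z$, and uniqueness of limits in $(X^2,d_2)$ forces $\bar Z=\tilde Z$, that is $(\bar x,\bar y)=(\tilde x,\tilde y)$, which is exactly the claimed uniqueness.

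The step I expect to be the main (albeit mild) obstacle is ensuring that comparability is genuinely preserved along the orbit, which rests on verifying the monotonicity of $T$ and on noting that \eqref{contr}, though stated only for $Y\ge V$, applies to any comparable pair by symmetry of $d_2$. Once these two points are in place, the remainder is a routine repetition of the Banach-iteration estimate already carried out in the proof of Theorem~\ref{th3}.
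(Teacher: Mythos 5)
Your proof is correct and follows essentially the same route as the paper: both arguments take the auxiliary element $Z$ furnished by \eqref{eq-7}, use the monotonicity of $T$ to keep $T^n(Z)$ comparable to each fixed point, and apply the contraction \eqref{contr} iteratively to force the two fixed points together. The only cosmetic differences are that the paper splits into the cases ``comparable'' and ``not comparable'' and concludes via the triangle inequality $d_2(Z^*,\overline{Z})\le k^n[d_2(Z^*,Z)+d_2(Z,\overline{Z})]\to 0$, whereas you handle both cases uniformly and conclude from the uniqueness of limits of $T^n(Z)$.
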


\begin{proof} In search for a contradiction, assume that $Z^*=(x^*,y^*)\in X^2$ is a coupled fixed point of $F$, different from $\overline{Z}=(\overline{x},\overline{y})$. This means that $d_2(Z^*,\overline{Z})>0.$ We discuss two cases:

Case 1. $Z^*$ is comparable to $\overline{Z}$.

As $Z^*$ is comparable to $\overline{Z}$ with respect to the ordering in $X^2$, by taking in \eqref{contr} $Y=Z^*$ and $V=\overline{Z}$ (or $V=Z^*$ and $Y=\overline{Z}$), we obtain
$$
d_2(T(Z^*),T(\overline{Z}))=d_2(Z^*,\overline{Z})\leq k\cdot d_2(Z^*,\overline{Z}),
$$
which, since $0\leq k<1$, leads to the contradiction $d_2(Z^*,\overline{Z})\leq 0$.

Case 2. $Z^*$ and $\overline{Z}$ are not comparable.

In this case, there exists an upper bound or a lower bound $Z=(z_1,z_2)\in X^2$ of $Z^*$ and $\overline{Z}$. Then, in view of the monotonicity of $T$, $T^n(Z)$ is comparable to $T^n(Z^*)=Z^*$ and to $T^n(\overline{Z})=\overline{Z}$.

Now, again by the contraction condition \eqref{contr}, we have
$$
d_2(Z^*,\overline{Z})=d_2(T^n(Z^*),T^n(\overline{Z}))\leq d_2(T^n(Z^*),T^n(Z))+d_2(T^n(Z),T^n(\overline{Z}))
$$
$$
\leq k^n\left[d_2(Z^*,Z)+d_2(Z,\overline{Z})\right]\rightarrow 0
$$
as $n\rightarrow \infty$,

which leads to the contradiction $0<d_2(Z^*,\overline{Z})\leq 0$.
\end{proof}

Similarly to \cite{Bha}, by assuming the same condition as in Theorem \ref{th4} but with respect to the ordered set $X$, that is, by assuming that every pair of elements of $X$ have either an upper bound or a lower bound in $X$, one can show that even the components of the coupled fixed points are equal. 
 
\begin{theorem} \label{th5}
In addition to the hypothesis of  Theorem \ref{th3}, suppose that every pair of elements of  $X$ has an upper bound or a lower bound in $X$. Then for the coupled fixed point $(\overline{x},\overline{y})$ we have $\overline{x} = \overline{y}$, that is, $F$ has a fixed point:
$$
F(\overline{x},\overline{x})=\overline{x}.
$$  
\end{theorem}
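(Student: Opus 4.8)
The plan is to push the whole argument into the complete metric space $(X^2,d_2)$ already used in the proof of Theorem \ref{th3}, working with the operator $T(x,y)=(F(x,y),F(y,x))$. By Theorem \ref{th3} we are handed a coupled fixed point $(\overline{x},\overline{y})$, so $\overline{Z}:=(\overline{x},\overline{y})$ satisfies $T(\overline{Z})=\overline{Z}$. My first move is to observe that the swapped point $\overline{Z}':=(\overline{y},\overline{x})$ is \emph{also} a fixed point of $T$, since $T(\overline{y},\overline{x})=(F(\overline{y},\overline{x}),F(\overline{x},\overline{y}))=(\overline{y},\overline{x})$. Hence the theorem reduces to showing $\overline{Z}=\overline{Z}'$, which is precisely $\overline{x}=\overline{y}$; and $\overline{x}=\overline{y}$ immediately gives $F(\overline{x},\overline{x})=F(\overline{x},\overline{y})=\overline{x}$. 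As a warm-up that also guides the general case, note that if $\overline{x}$ and $\overline{y}$ happen to be comparable, say $\overline{x}\ge\overline{y}$, then substituting $x=\overline{x},u=\overline{y},y=\overline{y},v=\overline{x}$ into \eqref{Bhas1} and using the fixed point relations yields $2\,d(\overline{x},\overline{y})\le 2k\,d(\overline{x},\overline{y})$, forcing $\overline{x}=\overline{y}$ at once.

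The hard part will be that the new hypothesis only supplies a bound in $X$, whereas the contraction \eqref{contr} may be invoked only between elements that are comparable in the product order of $X^2$. I expect this to be the main obstacle. Unlike Case 2 of Theorem \ref{th4}, a single common bound of $\overline{Z}$ and $\overline{Z}'$ in $X^2$ need not exist: the product order reverses the second coordinate, so a common upper (resp.\ lower) bound of the swapped pair would require $\{\overline{x},\overline{y}\}$ to possess \emph{simultaneously} an upper and a lower bound in $X$, which is more than the hypothesis grants.

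My way around this is to replace the missing single bound by a short zig-zag of comparable elements built from one $X$-bound. By hypothesis there is $z\in X$ with, say, $z\ge\overline{x}$ and $z\ge\overline{y}$ (the lower-bound case being symmetric). Setting $A:=(z,\overline{y})$ and $B:=(\overline{y},z)$, a direct check against the definition of the product order gives the three comparabilities $\overline{Z}\le A$, $B\le A$ and $B\le\overline{Z}'$. Applying $T^n$ and using that $T$ is monotone (established in the proof of Theorem \ref{th3}), these comparabilities are preserved, while $T^n\overline{Z}=\overline{Z}$ and $T^n\overline{Z}'=\overline{Z}'$. Then, by the triangle inequality followed by the contraction \eqref{contr} applied along each comparable link, $d_2(\overline{Z},\overline{Z}')\le k^n\bigl[d_2(\overline{Z},A)+d_2(A,B)+d_2(B,\overline{Z}')\bigr]$, whose right-hand side tends to $0$ as $n\to\infty$. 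Therefore $d_2(\overline{Z},\overline{Z}')=0$, i.e.\ $\overline{x}=\overline{y}$, as required. The only genuinely delicate step is the correct placement of $z$ in the intermediate points $A$ and $B$ so that every consecutive pair is comparable despite the reversed second coordinate; once that combinatorial check is secured, the limiting estimate is routine and parallels Case 2 of Theorem \ref{th4}.
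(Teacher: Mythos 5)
Your proposal is correct and follows essentially the same route as the paper: the comparable case is handled by plugging $(\overline{x},\overline{y},\overline{y},\overline{x})$ into \eqref{Bhas1}, and the non-comparable case by a zig-zag of comparable elements in $X^2$ built from a bound $z$ of $\{\overline{x},\overline{y}\}$, followed by applying $T^n$ and the contraction along each link and letting $n\to\infty$. The only cosmetic difference is your choice of intermediate points $(z,\overline{y})$ and $(\overline{y},z)$ versus the paper's $(\overline{x},z)$ and $(z,\overline{x})$; both chains check out against the product order.
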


\begin{proof}
Let $(\overline{x},\overline{y})$ be a coupled fixed point of $F$. We consider again two cases.

Case 1. If $\overline{x},\overline{y}$ are comparable, then $
F(\overline{x},\overline{y})=\overline{x}$  is comparable to $\overline{y}=F(\overline{y},\overline{x})$ and hence, by taking $x:=\overline{x},\,y:=\overline{y},\,u:=\overline{y},\,v:=\overline{x},\,$ in \eqref{Bhas1} one obtains
\begin{equation} \label{eq-9}
d(F(\overline{x},\overline{y}),F(\overline{y},\overline{x}))\leq k\cdot d(\overline{x},\overline{y}),
\end{equation}
and so
$$
d(\overline{x},\overline{y})=d(F(\overline{x},\overline{y}),F(\overline{y},\overline{x}))\leq k\cdot d(\overline{x},\overline{y}),
$$
which by $0\leq k <1$, yields $d(\overline{x},\overline{y})=0.$

Case 2. If $\overline{x},\overline{y}$ are not comparable, then there exists a $z\in X$ comparable to $\overline{x}$ and $\overline{y}$. Suppose $\overline{x}\leq z$ and $\overline{y}\leq z$ (the other case is similar). Then in view of the order on $X^2$, it follows that 
$$
(\overline{x},\overline{y})\geq (\overline{x},z);\,(\overline{x},z)\leq (z,\overline{x});\,(z,\overline{x})\geq (\overline{y},\overline{x}),
$$
that is $(\overline{x},\overline{y}),\, (\overline{x},z);\,(\overline{x},z),\,(z,\overline{x});\,(z,\overline{x}),\, (\overline{y},\overline{x})$ are comparable in $X^2$. Now, remind from the proof of Theorem \ref{th3} that by virtue of \eqref{eq-3} and \eqref{eq-5}, for any two comparable elements $Y,V$ in $X^2$, one has
\begin{equation} \label{eq-8}
d_2(T^n(Y),T^n(V))\leq k^n d_2(Y,V),
\end{equation}
where $T$ was defined in the proof of Theorem \ref{th3}.

Now use \eqref{eq-8} for the comparable pairs $Y=(\overline{x},\overline{y}),\, V=(\overline{x},z);\,Y=(\overline{x},z),\,V=(z,\overline{x});\,Y=(z,\overline{x}),\, V=(\overline{y},\overline{x})$, respectively, to get

\begin{equation} \label{eq-10}
d_2(T^n(\overline{x},\overline{y}),T^n(\overline{x},z))\leq k^n d_2((\overline{x},\overline{y}),(\overline{x},z),
\end{equation}

\begin{equation} \label{eq-11}
d_2(T^n(\overline{x},z),T^n(z,\overline{x})\leq k^n d_2((\overline{x},z),(z,\overline{x})),
\end{equation}

\begin{equation} \label{eq-12}
d_2(T^n(z,\overline{x}),T^n(\overline{y},\overline{x}))\leq k^n d_2((z,\overline{x}),(\overline{y},\overline{x})).
\end{equation}
Now, by using the triangle inequality and \eqref{eq-10}, \eqref{eq-11}, \eqref{eq-12}, one has
$$
d(\overline{x},\overline{y})=\frac{d(\overline{x},\overline{y})+d(\overline{x},\overline{y})}{2}=d_2((\overline{x},\overline{y}),(\overline{y},\overline{x}) )=d_2(T^n(\overline{x},\overline{y}),T^n(\overline{y},\overline{x}))
$$
$$
\leq d_2(T^n(\overline{x},\overline{y}),T^n(\overline{x},z))+d_2(T^n(\overline{x},z),T^n(z,\overline{x}))+d_2(T^n(z,\overline{x}),T^n(\overline{y},\overline{x}))
$$
$$
\leq k^n \left[d_2((\overline{x},\overline{y}),(\overline{x},z)+d_2((\overline{x},z),(z,\overline{x}))+d_2((z,\overline{x}),(\overline{y},\overline{x}))\right]=
$$
$$
\leq k^n \left[d(\overline{y},z)+d(\overline{x},z)\right]\rightarrow 0,\,\textnormal{ as } n\rightarrow\infty,
$$
which shows that $d(\overline{x},\overline{y})=0$, that is $\overline{x}=\overline{y}$.
\end{proof}

Similarly, one can obtain the same conclusion under the following alternative assumption.

\begin{theorem} \label{th6}
In addition to the hypothesis of  Theorem \ref{th3}, suppose that $x_0,y_0 \in X$ are comparable. Then for the coupled fixed point $(\overline{x},\overline{y})$ we have $\overline{x} = \overline{y}$, that is, $F$ has a fixed point:
$$
F(\overline{x},\overline{x})=\overline{x}.
$$   
\end{theorem}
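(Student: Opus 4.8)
The plan is to exploit the symmetry of the operator $T$ from the proof of Theorem \ref{th3} under the coordinate-swap map. Write $\sigma:X^2\to X^2$, $\sigma(x,y)=(y,x)$. From the definition $T(x,y)=\bigl(F(x,y),F(y,x)\bigr)$ one checks immediately that $T(\sigma(Y))=\sigma(T(Y))$ for every $Y\in X^2$, i.e. $T$ commutes with $\sigma$; moreover $\sigma$ is an isometry of $(X^2,d_2)$ because $d_2$ is symmetric in its two coordinate differences. I would record these two facts first, since they are the engine of the argument.

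Next I would observe that the extra hypothesis ``$x_0,y_0$ comparable'' makes the starting point $Z_0=(x_0,y_0)$ comparable to its swap $\sigma(Z_0)=(y_0,x_0)$ in the product order of $X^2$. Indeed, by the definition of that order, $x_0\le y_0$ gives $(x_0,y_0)\le(y_0,x_0)$, whereas $x_0\ge y_0$ gives $(y_0,x_0)\le(x_0,y_0)$; in either case $Z_0$ and $\sigma(Z_0)$ are comparable. This is the only place the added hypothesis is used, and it is precisely what replaces the auxiliary element $z\in X$ introduced in the proof of Theorem \ref{th5}.

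Then I would apply the estimate \eqref{eq-8} established in the proof of Theorem \ref{th3} to the comparable pair $Y=Z_0$, $V=\sigma(Z_0)$, obtaining
$$
d_2\bigl(T^n(Z_0),T^n(\sigma(Z_0))\bigr)\le k^n\,d_2\bigl(Z_0,\sigma(Z_0)\bigr).
$$
Using the commutation $T^n(\sigma(Z_0))=\sigma(T^n(Z_0))=\sigma(Z_n)$ together with the identity $d_2(Z_n,\sigma(Z_n))=\tfrac12\bigl[d(x_n,y_n)+d(y_n,x_n)\bigr]=d(x_n,y_n)$, the left-hand side equals $d(x_n,y_n)$, so that $d(x_n,y_n)\le k^n d(x_0,y_0)\to0$. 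Finally, since convergence $Z_n\to\bar Z=(\bar x,\bar y)$ in $d_2$ forces $x_n\to\bar x$ and $y_n\to\bar y$, passing to the limit yields $d(\bar x,\bar y)=\lim_{n\to\infty} d(x_n,y_n)=0$, hence $\bar x=\bar y$ and $F(\bar x,\bar x)=\bar x$.

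I do not anticipate a genuine obstacle: the argument is actually shorter than that of Theorem \ref{th5}, because comparability of $x_0$ and $y_0$ hands us a comparable pair ($Z_0$ and its swap) for free, so no intermediate bound in $X$ and no triangle-inequality chaining through three separate comparable pairs is required. The only steps demanding a little care are verifying the commutation $T\sigma=\sigma T$ and that $\sigma$ preserves both the metric $d_2$ and comparability in $X^2$ — all of which are immediate from the definitions of $T$, $d_2$, and the product order.
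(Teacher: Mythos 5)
Your proof is correct and is essentially the paper's own argument in slightly cleaner packaging: the paper proves $x_n\leq y_n$ by induction from the mixed monotone property and then applies \eqref{Bhas1} to the comparable pair $(x_n,y_n)$, $(y_n,x_n)$ to get $d(x_{n+1},y_{n+1})\leq k\,d(x_n,y_n)$, which is exactly your estimate $d_2\bigl(T^n(Z_0),T^n(\sigma(Z_0))\bigr)\leq k^n d_2\bigl(Z_0,\sigma(Z_0)\bigr)$ obtained via the equivariance $T\circ\sigma=\sigma\circ T$. Both arguments then conclude by letting $n\to\infty$ and using $Z_n\to\overline{Z}$ together with the continuity of $d$.
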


\begin{proof}
Assume we are in the case \eqref{mic}, that is 
$$
 x_{0} \leq F\left(x_{0},y_{0}\right)\textrm{ and }y_{0} \leq F\left(y_{0},x_{0}\right).
$$
Since $x_0,y_0$ are comparable, we have $x_0\leq y_0$ or $x_0\geq y_0$. Suppose we are in the first case. Then, by the mixed monotone property of $F$, we have
$$
 x_{1} =F\left(x_{0},y_{0}\right) \leq F\left(y_{0},x_{0}\right)=y_1,
$$
and, hence, by induction one obtains
\begin{equation} \label{eq-13}
x_n\leq y_n,\,n\geq 0.
\end{equation}
Now, since
$$
\overline{x}=\lim_{n\rightarrow\infty} F(x_n,y_n);\,\overline{y}=\lim_{n\rightarrow\infty} F(y_n,x_n),
$$
by the continuity of the distance $d$, one has
$$
d(\overline{x},\overline{y})=d(\lim_{n\rightarrow\infty} F(x_n,y_n),\lim_{n\rightarrow\infty} F(y_n,x_n))=\lim_{n\rightarrow\infty} d(F(x_n,y_n), F(y_n,x_n))
$$
$$
=\lim_{n\rightarrow\infty} d(x_{n+1},y_{n+1}).
$$
On the other hand, by taking $Y=(x_n,y_n),\,V=(y_n,x_n)$ in \eqref{Bhas1} we have
$$
d(F(x_n,y_n),F(y_n,x_n))\leq k d(x_n,y_n),\,n\geq 0,
$$
which actually means
$$
d(x_{n+1},y_{n+1})\leq k d(x_n,y_n),\,n\geq 0.
$$
Therefore
$$
d(\overline{x},\overline{y})=\lim_{n\rightarrow\infty} d(x_{n+1},y_{n+1})\leq \lim_{n\rightarrow\infty} k^n d(x_1,y_1)=0.
$$

\end{proof}

\begin{remark} \em
We note that for all Theorems \ref{th3}-\ref{th6} we actually can approximate the coupled fixed point $(\overline{x},\overline{y})$. 

Indeed, let us denote $Z_0=(x_0,y_0)\in X^2$ and let $\{Z_n\}\subset X^2$ be the Picard iteration  associated to $T$ and to the initial approximation $Z_0$, that is, the sequence $\{Z_n\}\subset X^2$ defined by
\begin{equation} \label{eq-30}
Z_{n+1}=T (Z_n),\,n\geq 0,
\end{equation}
where $Z_n=(x_n,y_n)\in X^2,\,n\geq 0$.

Then, $x_{n+1}=F(x_n,y_n)$, $y_{n+1}=F(y_n,x_n)$ and, by the proof of Theorem \ref{th3} we have
$$
\overline{x}=\lim_{n\rightarrow \infty} x_n; \,\overline{y}=\lim_{n\rightarrow \infty} y_n,
$$
that is,
$$
\overline{x}=\lim_{n\rightarrow\infty} F(x_n,y_n);\,\overline{y}=\lim_{n\rightarrow\infty} F(y_n,x_n),
$$
and the following error estimate holds
$$
d(x_n,\overline{x})+d(y_n,\overline{y})\leq \frac{k^n}{1-k}\left[d(x_1,x_0)+d(y_1,y_0)\right],\,n\geq 0.
$$
This can be also written as
$$
d_2\left((x_n,y_n),(\overline{x},\overline{y})\right)\leq \frac{k^n}{1-k}d_2\left((x_1,y_1), (x_0,y_0)\right),\,n\geq 0.
$$
One can similarly obtain an \textit{a posteriori} error estimate for the Picard iteration \eqref{eq-30}, see \cite{Ber07}.
\end{remark}

\section{Applications to periodic boundary value problems}

In order to compare our existence and uniqueness results established in Section 2 to the ones of \cite{Bha}, we shall consider the same periodic boundary value problem studied there, that is,
\begin{equation}\label{3.1}
    u'=h(t,u),\quad t\in I=(0,T)
\end{equation}
\begin{equation}\label{3.2}
    u(0)=u(T).
\end{equation}
Like in \cite{Bha}, we assume that there exist the continuous functions $f, g$ such that
$$  h(t,u)=f(t,u)+g(t,u),\quad t\in[0,T],  $$
where $f$ and $g$ fulfill the following conditions:
\begin{ass} \label{as1}
 There exist the positive numbers $\lambda_1, \lambda_2, \mu_1, \mu_2$, such that for all $u,v\in\R$, $v\leq u$,
\begin{equation}\label{3.3}
   0\leq (f(t,u)+\lambda_1 u)-(f(t,v)+\lambda_1 v)\leq \mu_1(u-v) 
\end{equation}
\begin{equation}\label{3.4}
    -\mu_2(u-v)\leq(g(t,u)-\lambda_2 u)-(g(t,v)-\lambda_2 v)\leq 0,
\end{equation}
where
\begin{equation}\label{3.4-1}
\frac{\mu_1+\mu_2}{\lambda_1+\lambda_2}<1.
\end{equation}
\end{ass}
In order to obtain the (unique) solution of Eqs. (\ref{3.1}) and (\ref{3.2}), we first study the existence of a solution of the following periodic system
\begin{equation}\label{3.5}
    u'+\lambda_1 u-\lambda_2 v=f(t,u)+g(t,v)+\lambda_1 u-\lambda_2 v
\end{equation}
\begin{equation}\label{3.6}
    v'+\lambda_1 v-\lambda_2 u=f(t,v)+g(t,u)+\lambda_1 v-\lambda_2 u
\end{equation}
together with the boundary conditions
\begin{equation}\label{3.7}
    u(0)=u(T)\;\;\text{and}\;\; v(0)=v(T).
\end{equation}
As shown in \cite{Bha}, the problem (\ref{3.5}-(\ref{3.7}) is equivalent to the system of 
integral equations
\begin{align*}
    u(t) & =\int^T_0 G_1(t,s)[f(s,u)+g(s,v)+\lambda_1 u-\lambda_2 v]+\\
    &\quad + G_2(t,s)[f(s,v)+g(s,u)+\lambda_1 v-\lambda_2 u]ds,
\end{align*}
\begin{align*}
    v(t) & =\int^T_0 G_1 (t,s)[f(s,v)+g(s,u)+\lambda_1 v-\lambda_2 u]+\\
    &\quad + G_2 (t,s)[f(s,u)+g(s,v)+\lambda_1 u-\lambda_2 v]ds,
\end{align*}
where, for $i=1,2$, 
$$  G_i(t,s)=\frac{1}{2}\left(\frac{e^{\tau_i(t-s)}}{1-e^{\tau_1 T}}+
    \frac{e^{\tau_{i+1}(t-s)}}{1-e^{\tau_2 T}}\right),\;\;\text{if}\;\;
    0\leq s<t\leq T
$$
and
$$  G_i(t,s)=\frac{1}{2}\left(\frac{e^{\tau_i(t+T-s)}}{1-e^{\tau_1 T}}+
    \frac{e^{\tau_{i+1}(t+T-s)}}{1-e^{\tau_2 T}}\right),\;\;
    \text{if}\;\; 0\leq t<s\leq T,
$$
with  $\tau_1=-(\lambda_1+\lambda_2)$, $\tau_2=\lambda_2-\lambda_1$, and $\tau_3=\tau_1$. 

We shall need Lemma 3.2 from \cite{Bha}.
\begin{lemma} \label{lem1} If 
\begin{equation}\label{3.8}
    \ln\frac{2e-1}{e}\leq (\lambda_2-\lambda_1)T
\end{equation}
and
\begin{equation}\label{3.9}
    (\lambda_1+\lambda_2)T\leq 1
\end{equation}
then $G_1(t,s)\geq 0$ and $G_2(t,s)\leq 0$, for $0\leq t$, $s\leq T$.
\end{lemma}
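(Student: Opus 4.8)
The plan is to read off the signs of every ingredient from the two hypotheses, reduce the two-variable sign problem to a one-variable one on $[0,T]$, and then reduce that to a single scalar inequality that is precisely calibrated by \eqref{3.8}--\eqref{3.9}.

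First I would record the signs. By definition $\tau_1=-(\lambda_1+\lambda_2)<0$, while the right-hand side of \eqref{3.8} is positive, so \eqref{3.8} forces $\tau_2=\lambda_2-\lambda_1>0$. Hence $1-e^{\tau_1 T}>0$ and $1-e^{\tau_2 T}<0$, and every exponential $e^{\tau_i(\cdot)}$ is positive. I would also note that on the triangle $s<t$ the kernels depend on $(t,s)$ only through $r:=t-s\in(0,T]$, and on the triangle $t<s$ only through $r:=t+T-s\in[0,T)$, so in both regimes the sign of each kernel is governed by the single one-variable function obtained by letting $r$ range over $[0,T]$.

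On $[0,T]$ each kernel is a combination of the two quotients $e^{\tau_i r}/(1-e^{\tau_j T})$; since the exponentials are positive and one denominator is positive while the other is negative, $G_1$ in particular is a sum of one nonnegative and one nonpositive term, so its sign is not automatic. For this substantive case I would differentiate $G_1$ in $r$ to show it is monotone on $[0,T]$, so that its extreme value is attained at an endpoint $r\in\{0,T\}$; clearing the two denominators --- whose product is negative, so the inequality direction reverses --- then collapses the decisive endpoint into the single scalar inequality
$$
e^{\tau_1 T}+e^{\tau_2 T}\ge 2 .
$$
I would establish $G_1\ge0$ in this way; the companion bound $G_2\le0$ is then handled by the same scheme and, depending on how its two terms pair with the denominators, may even follow at once from the sign information gathered above.

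Finally I would verify that inequality from the hypotheses, and this is where the peculiar constants in \eqref{3.8}--\eqref{3.9} reveal their purpose. Condition \eqref{3.9} gives $\tau_1 T=-(\lambda_1+\lambda_2)T\ge-1$, hence $e^{\tau_1 T}\ge e^{-1}$, and \eqref{3.8} gives $\tau_2 T=(\lambda_2-\lambda_1)T\ge\ln\frac{2e-1}{e}$, hence $e^{\tau_2 T}\ge\frac{2e-1}{e}$; adding these two lower bounds yields $e^{\tau_1 T}+e^{\tau_2 T}\ge \frac1e+\frac{2e-1}{e}=2$, exactly what is needed. I expect the main obstacle to be the $G_1$ step: correctly treating the indefinite-sign combination, proving monotonicity in $r$, and identifying the extremal endpoint. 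Once that is in place the whole lemma rests on the clean bound $e^{\tau_1 T}+e^{\tau_2 T}\ge2$, and everything else is careful sign bookkeeping.
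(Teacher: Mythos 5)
A preliminary remark: the paper offers no proof of this lemma at all --- it is imported verbatim as Lemma 3.2 of \cite{Bha} --- so your sketch has to stand entirely on its own.

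Your sign bookkeeping, the reduction to the single variable $r=t-s$ (resp.\ $r=t+T-s$) ranging over $[0,T]$, and the observation that \eqref{3.8} and \eqref{3.9} combine to give exactly $e^{\tau_1T}+e^{\tau_2T}\ge \frac1e+\frac{2e-1}{e}=2$ are all correct, and that is surely the intended role of the two constants. The gap sits exactly at the step you flag as the main obstacle: the identification of the decisive endpoint. Put $a=e^{\tau_1T}\in(0,1)$ and $b=e^{\tau_2T}>1$. Both summands of
\[
G_1'(r)=\frac{1}{2}\Bigl(\frac{\tau_1e^{\tau_1r}}{1-a}+\frac{\tau_2e^{\tau_2r}}{1-b}\Bigr)
\]
are negative ($\tau_1<0$ over a positive denominator, $\tau_2>0$ over a negative one), so $G_1$ is strictly decreasing in $r$ and its minimum is attained at $r=T$, not at $r=0$. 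Your inequality $a+b\ge 2$ is precisely the condition $G_1(0)\ge0$, i.e.\ nonnegativity at the wrong endpoint. At $r=T$ one computes $G_1(T)=\frac12\cdot\frac{2ab-a-b}{(1-a)(b-1)}$, so the decisive scalar inequality is $a^{-1}+b^{-1}\le 2$, and this does \emph{not} follow from \eqref{3.8}--\eqref{3.9}: with $\lambda_1T=0.01$ and $\lambda_2T=0.99$ both hypotheses hold, yet $a^{-1}+b^{-1}=e+e^{-0.98}>2$, so $G_1(T,0)<0$ for the kernels as printed. (The same parameters make the printed $G_2(0,T)=\frac12\bigl(\frac{1}{1-a}+\frac{1}{1-b}\bigr)$ strictly positive, so your hope that $G_2\le0$ ``follows at once'' also fails; its two terms again have opposite signs.) In short, the argument does not close, and no argument can close it for the formulas and hypotheses exactly as reproduced here; the lemma is being cited from \cite{Bha} together with whatever conventions or corrections apply there, and an honest proof would first have to sort that out.
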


Consider now $X=C(I, \R)$ be the metric space of all continuous functions $u:I\to\R$, endowed with the sup metric:
$$  d(u,v)=\sup_{t\in I}|u(t)-v(t)|,\;\; \text{for}\;\; u,v\in X.  $$
Then the corresponding metric $d_2$ on $X^2$ is defined by
$$  d_2((u_1, v_1), (u_2, v_2))=\frac{1}{2}\left[\sup_{t\in I}
    |u_1(t)-u_2(t)|+\sup_{t\in I}|v_1(t)-v_2(t)|\right].
$$
Also consider on $X^2$ the partial order relation:
$$  (u_1, v_1)\leq (u_2, v_2)\Leftrightarrow u_1(t)\leq u_2(t)\;\;
    \text{and}\;\; v_1(t)\geq v_2(t),\; t\in I,
$$
and define for $t\in I$ the operator
\begin{align}\label{3.10}
    A[u,v](t) & =\int^T_0 G_1(t,s)[f(s,u)+g(s,v)+\lambda_1 u-\lambda_2 v]\nonumber\\
    &\quad + G_2(t,s)[f(s,v)+g(s,u)+\lambda_1 v-\lambda_2 u]ds.
\end{align}
It is obvious that, if $(u,v)\in X^2$ is a coupled fixed point of $A$, then we have
$$  u(t)=A[u,v](t)\;\;\text{and}\;\; v(t)=A[v,u](t)  $$
for all $t\in I$, and $(u,v)$ is a solution of (\ref{3.5}) and (\ref{3.6}) satisfying the periodic boundary condition (\ref{3.7}).

\begin{definition}
\em (\cite{Bha}) A pair  $(\alpha, \beta)\in X^2$ is called a \emph{coupled lower-upper solution} of the periodic BVP (\ref{3.1}) and (\ref{3.2}) (and hence of (\ref{3.5})-(\ref{3.7}) if
$$  \alpha'(t)\leq f(t, \alpha(t))+g(t, \beta(t)) $$
and
$$  \beta'(t)\geq f(t, \beta(t))+g(t, \alpha(t)),\quad t\in I,$$
together with the periodic conditions
$$  \alpha(0)\leq \alpha(T)\;\;\text{and}\;\; \beta(0)\geq\beta(T).  $$
\end{definition}

The following lemma, which is Lemma 3.5 in \cite{Bha}, exhibits the relation between coupled lower-upper solutions of (\ref{3.5})-(\ref{3.7}) and the integral operator $A$ given by (\ref{3.10}).

\begin{lemma} \label{lem2} If
\begin{equation}\label{3.11}
    \lambda_1(\alpha(T)-\alpha(0))+\lambda_2(\beta(0)-\beta(T))
    \leq\frac{\alpha(T)-\alpha(0)}{T}\,,
\end{equation}
\begin{equation}\label{3.12}
    \lambda_1(\beta(0)-\beta(T))+\lambda_2(\alpha(T)-\alpha(0))\leq
    \frac{\beta(0)-\beta(T)}{T}\,,
\end{equation}
then $\alpha(t)\leq A[u(t), v(t)]$ and $\beta(t)\geq A[v(t), u(t)]$, $t\in(0,T).$
\end{lemma}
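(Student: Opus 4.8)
The plan is to exploit the fact, built into the construction of $G_1$ and $G_2$, that the pair $\bigl(A[\alpha,\beta],A[\beta,\alpha]\bigr)$ is the unique solution of the linear periodic system \eqref{3.5}--\eqref{3.7} in which the nonlinear data are frozen at the coupled lower-upper solution $(\alpha,\beta)$. First I would record the differential inequalities defining $(\alpha,\beta)$ and add to them the linear terms $\lambda_1\alpha-\lambda_2\beta$ and $\lambda_1\beta-\lambda_2\alpha$, so that $(\alpha,\beta)$ satisfies the same left-hand operator as in \eqref{3.5}--\eqref{3.6} but with right-hand sides $p:=f(\cdot,\alpha)+g(\cdot,\beta)+\lambda_1\alpha-\lambda_2\beta$ and $q:=f(\cdot,\beta)+g(\cdot,\alpha)+\lambda_1\beta-\lambda_2\alpha$ decreased, respectively increased, by the nonnegative slacks $h_1:=\bigl(f(\cdot,\alpha)+g(\cdot,\beta)\bigr)-\alpha'\ge0$ and $h_2:=\beta'-\bigl(f(\cdot,\beta)+g(\cdot,\alpha)\bigr)\ge0$. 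The one-sided boundary data $\alpha(0)\le\alpha(T)$ and $\beta(0)\ge\beta(T)$ are kept aside for the final step.

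Next I would insert these expressions into the integral representation \eqref{3.10}. Writing $A[\alpha,\beta]=\int_0^T G_1(t,s)\,p\,ds+\int_0^T G_2(t,s)\,q\,ds$ and replacing $p,q$ by the operator value at $(\alpha,\beta)$ plus the slacks, the kernel acting on the exact operator value reconstructs $\alpha$ up to boundary-defect terms, while the slacks contribute $\int_0^T G_1(t,s)h_1\,ds-\int_0^T G_2(t,s)h_2\,ds$. By Lemma \ref{lem1}, namely $G_1\ge0$ and $G_2\le0$ under \eqref{3.8}--\eqref{3.9}, together with $h_1,h_2\ge0$, this slack contribution is nonnegative. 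The same computation applied to $A[\beta,\alpha]$ (with the roles of $p,q$ and of $G_1,G_2$ interchanged) produces the analogous slack $-\int_0^T G_1 h_2\,ds+\int_0^T G_2 h_1\,ds\le0$. Thus, modulo the boundary defects, one already gets $A[\alpha,\beta]\ge\alpha$ and $A[\beta,\alpha]\le\beta$, which is exactly the desired ordering.

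To handle the defects cleanly I would diagonalize the linear operator along its eigenvectors $(1,1)$ and $(1,-1)$, i.e.\ pass to $\alpha+\beta$ and $\alpha-\beta$; the operator decouples into two scalar first-order periodic problems with exponential rates $\lambda_1-\lambda_2$ and $\lambda_1+\lambda_2$, matching the exponents $\tau_1,\tau_2$ appearing in $G_1,G_2$. In the $(1,-1)$ direction the rate $\lambda_1+\lambda_2>0$ is contracting, and a bare first-order maximum principle (the integrating factor $e^{(\lambda_1+\lambda_2)t}$ is monotone and the boundary jump $\bigl(\alpha(T)-\alpha(0)\bigr)+\bigl(\beta(0)-\beta(T)\bigr)\ge0$ has the favourable sign) already forces the corresponding combination of the error functions to have the correct sign.

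The main obstacle is the $(1,1)$ direction, where the rate $\lambda_1-\lambda_2$ is negative (recall \eqref{3.8} forces $\lambda_2>\lambda_1$) and corresponds to the growing homogeneous mode; here the maximum principle alone fails and the boundary defect must be quantitatively dominated. This is precisely the role of hypotheses \eqref{3.11}--\eqref{3.12}: setting $a:=\alpha(T)-\alpha(0)\ge0$ and $b:=\beta(0)-\beta(T)\ge0$, these read $\lambda_1 a+\lambda_2 b\le a/T$ and $\lambda_1 b+\lambda_2 a\le b/T$, and, combined with \eqref{3.9}, they bound the boundary-defect terms in the growing mode by the nonnegative slack contribution identified above. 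I would therefore finish by verifying that \eqref{3.11}--\eqref{3.12} make the net boundary contribution in the $(1,1)$ mode nonpositive for $A[\alpha,\beta]-\alpha$ and nonnegative for $A[\beta,\alpha]-\beta$, so that both inequalities $\alpha\le A[\alpha,\beta]$ and $\beta\ge A[\beta,\alpha]$ hold on $(0,T)$.
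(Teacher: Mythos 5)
The paper itself gives no proof of this lemma --- it is quoted verbatim as Lemma~3.5 of the cited Bhaskar--Lakshmikantham paper --- so your attempt can only be judged on its own merits. Your setup is sound: writing $p=\alpha'+\lambda_1\alpha-\lambda_2\beta=F_1-h_1$ and $q=\beta'+\lambda_1\beta-\lambda_2\alpha=F_2+h_2$ with $h_1,h_2\ge 0$, the slack contribution to $A[\alpha,\beta]$ is $\int_0^T G_1h_1\,ds-\int_0^T G_2h_2\,ds\ge 0$ by Lemma~\ref{lem1}, and symmetrically for $A[\beta,\alpha]$; the diagonalization into the rates $\lambda_1\pm\lambda_2$ is also the right way to compute the boundary defect. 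The problem is that the one step where the hypotheses \eqref{3.11}--\eqref{3.12} actually enter --- which is the entire content of the lemma --- is left as a promissory note (``I would therefore finish by verifying\dots''), and the description of what must be verified there is not coherent. Writing $e_1=A[\alpha,\beta]-\alpha$ and $e_2=A[\beta,\alpha]-\beta$, the $(1,1)$-mode boundary defect contributes the \emph{same} quantity $\tfrac12\,(a-b)e^{(\lambda_2-\lambda_1)t}/(1-e^{(\lambda_2-\lambda_1)T})$ (with $a=\alpha(T)-\alpha(0)$, $b=\beta(0)-\beta(T)$) to both $e_1$ and $e_2$, so it cannot be made ``nonpositive for $e_1$ and nonnegative for $e_2$'' as you propose. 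More generally, knowing the signs of $e_1-e_2$ and $e_1+e_2$ separately does not determine the signs of $e_1$ and $e_2$; what is actually needed is the pointwise domination $|e_1+e_2|\le e_1-e_2$, i.e.\ that the growing $(1,1)$-mode is dominated in absolute value by the contracting $(1,-1)$-mode. For the slack parts this domination is exactly $|G_2|\le G_1$, which you do use; for the boundary parts it amounts to
\begin{equation*}
\frac{|a-b|\,e^{(\lambda_2-\lambda_1)t}}{e^{(\lambda_2-\lambda_1)T}-1}\;\le\;\frac{(a+b)\,e^{-(\lambda_1+\lambda_2)t}}{1-e^{-(\lambda_1+\lambda_2)T}},\qquad t\in[0,T],
\end{equation*}
and deducing this from \eqref{3.11}, \eqref{3.12}, \eqref{3.8}, \eqref{3.9} is a genuine computation (one first needs $a\ge b$ from subtracting \eqref{3.12} from \eqref{3.11}, then the quantitative bound $\tfrac{a-b}{a+b}\le\tfrac{1-\lambda_2T}{1+\lambda_2T}$, and finally a comparison with the exponentials at the worst point $t=T$). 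None of this is carried out, so the proof is not complete.

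A cleaner route, which is what conditions \eqref{3.11}--\eqref{3.12} are visibly designed for, is to absorb the boundary defect before invoking the Green's function at all: perturb $\alpha,\beta$ by the linear corrections $\pm a\frac{T-t}{T}$, $\mp b\frac{T-t}{T}$ so as to make them $T$-periodic, and check that \eqref{3.11}--\eqref{3.12} are precisely what is needed for the perturbed pair to still satisfy the differential inequalities; then the representation through $G_1\ge0$, $G_2\le0$ applies with no defect terms. You may want to compare your mode-by-mode estimate with that argument.
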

\begin{remark}\em
As noted in \cite{Bha}, the hypotheses of Lemma \ref{lem2} are satisfied, for example, if
$$  T(\lambda_1+\lambda_2)<\frac{\beta(0)-\beta(T)}{\alpha(T)-\alpha(0)}<1,
$$
and also, conditions (\ref{3.8}), (\ref{3.9}), (\ref{3.11}) and (\ref{3.12}) together with the Assumption 3.1, can all be satisfied simultaneously by making suitable choices of $\lambda_1, \lambda_2$ and $T$.
\end{remark}
 The next theorem extends Theorem 3.7 in \cite{Bha} by considering the weaker condition \eqref{3.4-1} in Assumption \ref{as1}.
\begin{theorem}
Consider the problem (\ref{3.5})-(\ref{3.7}) with $f,g\in C(I\times \R, \R)$ and suppose that the Assumption $3.1$ is satisfied. Assume there exist coupled lower-upper solutions $\alpha(t)$ and $\beta(t)$ for $(\ref{3.5})-(\ref{3.7})$ respectively, such that $(\ref{3.11})$ and $(\ref{3.12})$ hold. Further, if $(\ref{3.8})$ and $(\ref{3.9})$ are fulfilled, then there exists a unique solution of the periodic BVP $(\ref{3.5})-(\ref{3.7})$.
\end{theorem}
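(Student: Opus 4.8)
The plan is to recast the periodic system \eqref{3.5}--\eqref{3.7} as a coupled fixed point problem for the integral operator $A$ defined in \eqref{3.10} and then invoke Theorems \ref{th3} and \ref{th4}. As already noted in the excerpt, a coupled fixed point $(\overline u,\overline v)\in X^2$ of $A$, i.e. a pair with $\overline u=A[\overline u,\overline v]$ and $\overline v=A[\overline v,\overline u]$, is precisely a solution of \eqref{3.5}--\eqref{3.7}; so it suffices to verify the hypotheses of Theorem \ref{th3} for $F=A$ on the complete metric space $(X^2,d_2)$ with $X=C(I,\R)$.

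First I would check that $A$ is mixed monotone. Grouping the integrand of \eqref{3.10} as $[f(s,u)+\lambda_1 u]+[g(s,v)-\lambda_2 v]$ in the $G_1$-block and $[f(s,v)+\lambda_1 v]+[g(s,u)-\lambda_2 u]$ in the $G_2$-block, the left-hand inequalities in \eqref{3.3} and \eqref{3.4} say that $u\mapsto f(s,u)+\lambda_1 u$ is nondecreasing while $u\mapsto g(s,u)-\lambda_2 u$ is nonincreasing. Since Lemma \ref{lem1} gives $G_1\ge 0$ and $G_2\le 0$ under \eqref{3.8}--\eqref{3.9}, the $G_1$-block is nondecreasing in $u$ and nonincreasing in $v$, and the sign reversal from $G_2\le 0$ makes the $G_2$-contribution behave likewise; hence $A[u,v]$ is nondecreasing in $u$ and nonincreasing in $v$.

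Next comes the contractive condition \eqref{Bhas1}, which I expect to be the main obstacle. For $u\ge u'$ and $v\le v'$ I would use the full two-sided Lipschitz bounds in \eqref{3.3}--\eqref{3.4} to estimate both integrands; after collecting terms, the differences $A[u,v]-A[u',v']$ and $A[v',u']-A[v,u]$ are each controlled by $(\mu_1+\mu_2)[d(u,u')+d(v,v')]$ times $\int_0^T (G_1(t,s)-G_2(t,s))\,ds$. The crux is the Green's function identity $\int_0^T (G_1(t,s)-G_2(t,s))\,ds=\tfrac{1}{\lambda_1+\lambda_2}$, which I would obtain either by direct computation from the explicit kernels (using $\tau_1=-(\lambda_1+\lambda_2)$) or, more conceptually, by observing that $G_1-G_2$ is the periodic Green's function of $\phi'+(\lambda_1+\lambda_2)\phi=\psi$ arising from subtracting \eqref{3.6} from \eqref{3.5}, so that testing against $\psi\equiv 1$ returns the constant steady state $\tfrac{1}{\lambda_1+\lambda_2}$. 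Adding the two estimates then yields \eqref{Bhas1} with $k=\tfrac{\mu_1+\mu_2}{\lambda_1+\lambda_2}$, which is $<1$ by \eqref{3.4-1}.

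Finally I would supply the initialization and the uniqueness. Taking $x_0=\alpha$ and $y_0=\beta$, Lemma \ref{lem2} (whose hypotheses \eqref{3.11}--\eqref{3.12} are assumed) gives $\alpha\le A[\alpha,\beta]$ and $\beta\ge A[\beta,\alpha]$, which is exactly the ordering $Z_0\le T(Z_0)$ in $X^2$ needed to launch the monotone Picard iteration in the proof of Theorem \ref{th3}; that theorem then produces a coupled fixed point $(\overline u,\overline v)$, i.e. a solution of \eqref{3.5}--\eqref{3.7}. For uniqueness I would verify condition \eqref{eq-7}: in $X=C(I,\R)$ any two functions admit a continuous pointwise maximum and minimum, so for any $(u_1,v_1),(u_2,v_2)\in X^2$ the pair $(\max(u_1,u_2),\min(v_1,v_2))$ is a common upper bound in the order of $X^2$. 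Hence \eqref{eq-7} holds and Theorem \ref{th4} yields uniqueness, completing the argument.
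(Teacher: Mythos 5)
Your proposal is correct and follows essentially the same route as the paper: both recast \eqref{3.5}--\eqref{3.7} as a coupled fixed point problem for the integral operator $A$ of \eqref{3.10}, verify mixed monotonicity and the symmetric contraction \eqref{Bhas1} with $k=\frac{\mu_1+\mu_2}{\lambda_1+\lambda_2}<1$ via the Green's function estimate $\int_0^T\left[G_1(t,s)-G_2(t,s)\right]ds=\frac{1}{\lambda_1+\lambda_2}$, and initialize the iteration with the coupled lower--upper solution $(\alpha,\beta)$ through Lemma \ref{lem2}. The only divergence is at the uniqueness step, where you invoke Theorem \ref{th4} via condition \eqref{eq-7} (using pointwise $\max$/$\min$ in $C(I,\R)$) whereas the paper appeals to Theorem \ref{th5}; your choice is in fact the more accurate one, since Theorem \ref{th4} is the uniqueness statement proper while Theorem \ref{th5} asserts equality of the components of the coupled fixed point.
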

\begin{proof}
We first obtain the existence of a unique solution of the periodic BVB (\ref{3.5})-(\ref{3.7}) by showing that the operator $A:X\times X\to X$ has a unique coupled fixed point in $X\times X$. To this end, we verify that $A$ satisfies the hypotheses of Theorems \ref{th3} and \ref{th5}.

Like in the proof of Theorem 3.7 in \cite{Bha}, we can show that, for any $(u_1, v)\geq (u_2, v)\in X^2$,
$$  A[u_1, v](t)\geq A[u_2, v](t),   $$
and that for $(u, v_1)\leq (u, v_2)\in X^2$, one has
$$  A[u, v_1](t)\geq A[u, v_2](t),   $$
which shows that $A[u,v]$ is mixed monotone.

Now, let us consider $(x,y)$, $(u,v)\in X^2$, with $(x,y)\leq (u,v)$, that is, $u\geq x$ and $v\leq y$. 
We have
\begin{align*}
    d(A[u,v], A[x,y]) &=\sup_{t\in I}|A[u,v](t)-A[x,y](t)|\\
    &=\sup_{t\in I}\Big|\int^T_0 G_1(t,s)\big[[f(s,u)+g(s,v)+\lambda_1 u-\lambda_2 v]\\
    &\quad - [f(s,x)+g(s,y)+\lambda_1 x-\lambda_2 y]\big]\\
    &\quad + G_2(t,s)\big[[f(s,v)+g(0,u)+\lambda_1 v-\lambda_2 u]\\
    &\quad -[f(s,y)+g(s,x)+\lambda_1 y-\lambda_2 x]\big]ds\Big| \\
    &=\sup_{t\in I}\Big|\int^T_0 G_1(t,s)\big[[f(s,u)+g(s,v)+\lambda_1 u-\lambda_2 v]\\
    &\quad - [f(s,x)+g(s,y)+\lambda_1 x-\lambda_2 y]\big]\\
    &\quad - G_2(t,s)\big[[f(s,y)+g(s,x)+\lambda_1 y-\lambda_2 x]\\
    &\quad - [f(s,v)+g(s,u)+\lambda_1 v-\lambda_2 u]\big]ds\Big|\\
    &\leq\sup_{t\in I}\Big|\int^T_0 G_1(t,s)[\mu_1(u-x)+\mu_2(y-v)]\\
    &\quad - G_2(t,s)[\mu_1(y-v)+\mu_2(u-x)]ds\Big|\\
    &\leq 2(\mu_1+\mu_2)\cdot d_2\big((u,v), (x,y)\big)\\
    &\quad\times \sup_{t\in I}\Big|\int^T_0[G_1(t,s)-G_2(t,s)]ds\Big|\\
    &=2(\mu_1+\mu_2)\cdot d_2\big((u,v), (x,y)\big)\\
    &\quad\times\sup_{t\in I}\Big|\int^t_0\frac{e^{\tau_1(t-s)}}{1-e^{\tau_1T}}\, ds+
    \int^T_t\frac{e^{\tau_1(t+T-s)}}{1-e^{\tau_1T}}\Big|
\end{align*}
which yields, after integrating,
\begin{equation}\label{3.13}
    =\frac{2(\mu_1+\mu_2)}{\lambda_1+\lambda_2}\,d_2\big((u,v), (x,y)\big).
\end{equation}
Similarly, one obtains
\begin{equation}\label{3.14}
    d(A[y,x], A[v,u])\leq\frac{2(\mu_1+\mu_2)}{\lambda_1+\lambda_2}\,
    d_2\big((u,v), (x,y)\big).
\end{equation}
By summing up (\ref{3.13}) and (\ref{3.14}) we obtain that for all $x\geq u,\,y\leq v$:
$$  d_2(A[u,v], A[x,y])\leq \frac{\mu_1+\mu_2}{\lambda_1+\lambda_2}\,
    d_2\big((u,v), (x,y)\big),
$$
which proves that $A$ verifies the contraction condition \eqref{Bhas1} in Theorem \ref{th3}.

Now, let $(\alpha, \beta)\in X^2$ be a coupled lower-upper solution of (\ref{3.1}) and (\ref{3.2}). By Lemma \ref{lem2}, we have
$$  \alpha(t)\leq A[\alpha(t), \beta(t)]   $$
and
$$  \beta(t)\geq A[\beta(t), \alpha(t)],  $$
which show that all hypotheses of Theorem \ref{th3} and Theorem \ref{th5} are satisfied.\\
This shows that $A$ has a coupled fixed point in $X^2$, which, by Theorem \ref{th5}, is unique.
\end{proof}
\begin{remark}
\em Since the hypothesis of Theorem \ref{th6} are also satisfied, we deduce that the components of the fixed point $(u,v)$ are actually equal, that is, $u(t)\equiv v(t)$, which implies that $u=A[u,u]$ and hence $u(t)$ is the unique solution of
$$  u'(t)=f(t, u(t))+g(t, u(t))=h(t, u(t)),\quad t\in I.
$$
This establishes that the periodic BVP (\ref{3.1}) and (\ref{3.2}) has a unique solution on $I$.
\end{remark}
\begin{remark}\em
Note that our Theorem 3.1 is more general than Theorem 3.7 in \cite{Bha} since, if $\mu_1\neq \mu_2$, then
$$  \frac{\mu_1+\mu_2}{\lambda_1+\lambda_2} <
    \frac{2\max \{\mu_1, \mu_2\}}{\lambda_1+\lambda_2}\,.
$$
For example, if in Assumption 3.1 we have $\lambda_1=2$, $\lambda_2=3$, $\mu_1=1$ and $\mu_2=3$, then \eqref{3.4-1} holds:
$$  \frac{\mu_1+\mu_2}{\lambda_1+\lambda_2}=\frac{4}{5}<1,  $$
so Theorem 3.1 can be applied but, since
$$  k=\frac{2\max\{\mu_1, \mu_2\}}{\lambda_1+\lambda_2}=
    \frac{6}{5}>1,
$$
condition \eqref{Bhas} does not hold and hence Theorem 3.7 in \cite{Bha} cannot be applied.
\end{remark}
\begin{remark}
\em Note also that our contractive condition \eqref{Bhas1} is symmetric, while the contractive condition used in \cite{Bha} is not. Our generalization is based in fact on the idea of making the last one symmetric, which is very natural, as the great majority of contractive conditions in metrical fixed point theory are symmetric, see \cite{Rus2}.
\end{remark}

\vskip 0.5 cm {\it 

Department of Mathematics and Computer Science

North University of Baia Mare

Victoriei 76, 430122 Baia Mare ROMANIA

E-mail: vberinde@ubm.ro}
\end{document}